\documentclass[a4paper,11pt,centertags,reqno,psamsfonts,draft]{amsart}

\usepackage{letltxmacro}        
\usepackage{ifthen}		        
\usepackage[headings]{fullpage} 
\usepackage{setspace}		    
\usepackage[inline]{enumitem}   
\usepackage{array}				
\usepackage{fancybox}		    
\usepackage{graphicx}           
\usepackage{subfigure}		    
\usepackage{amsmath}	        
\usepackage{amsthm}				
\usepackage{amssymb}				
\usepackage{mathtools}			
\usepackage{mathrsfs}			
\usepackage{stmaryrd}			
\usepackage{yhmath}				
\usepackage{accents}			
\usepackage{extarrows}          
\usepackage{xfrac}              
\usepackage[all]{xy}			
\usepackage[round,comma,authoryear,sort&compress]{natbib} 

\newcommand{\N}{\mathbb{N}}
\newcommand{\Z}{\mathbb{Z}}

\newcommand{\R}{\mathbb{R}}



\newcommand{\C}[2]{
\ifthenelse{#1=0 \and #2=0}{\textsf{\upshape C}}
{\ifthenelse{#2=0}{\textsf{\upshape C}^{#1}}
{\textsf{\upshape C}^{#1,#2}}}
}

\DeclarePairedDelimiter{\floor}{\lfloor}{\rfloor}

\renewcommand{\d}{\mathrm{d}}

\newcommand{\Laplace}[1]{\ensuremath{\mathscr{L}_{#1}}}

\newcommand{\e}{\mathrm{e}}


\newcommand{\E}{\textsf{\upshape E}}

\renewcommand{\P}{\textsf{\upshape P}}

\newcommand{\indicator}[1]{\mathbf{1}_{#1}}

\DeclarePairedDelimiter{\PredQVar}{\langle}{\rangle}

\newcommand{\filt}[1]{\mathfrak{#1}}
\newcommand{\sigalg}[1]{\mathscr{#1}}

\newcommand{\Sf}{\ensuremath{\mathscr{S}_{\textsf{\upshape f}}}}

\newcommand{\M}{\ensuremath{\mathscr{M}}}
\newcommand{\Mloc}{\ensuremath{\mathscr{M}_{\textsf{\upshape loc}}}}

\newcommand{\Hp}[1]{\ensuremath{\mathscr{H}^#1}}

\newcommand{\scale}{\ensuremath{\mathfrak{s}}}
\newcommand{\speed}{\ensuremath{\mathfrak{m}}}

\makeatletter
\let\oldr@@t\r@@t
\def\r@@t#1#2{%
\setbox0=\hbox{$\oldr@@t#1{#2\,}$}\dimen0=\ht0
\advance\dimen0-0.2\ht0
\setbox2=\hbox{\vrule height\ht0 depth -\dimen0}%
{\box0\lower0.4pt\box2}}
\LetLtxMacro{\oldsqrt}{\sqrt}
\renewcommand*{\sqrt}[2][\ ]{\oldsqrt[#1]{#2}}
\makeatother

\theoremstyle{plain}
\newtheorem{theorem}{Theorem}
\newtheorem{lemma}[theorem]{Lemma}

\newtheorem{corollary}[theorem]{Corollary}
\theoremstyle{definition}

\newtheorem{example}[theorem]{Example}

\newtheorem{problem}[theorem]{Problem}
\theoremstyle{remark}

\numberwithin{theorem}{section}
\numberwithin{equation}{section}
\numberwithin{figure}{section}
\numberwithin{table}{section}

\graphicspath{{Figures/}}

\bibpunct{(}{)}{,}{a}{}{,}

\allowdisplaybreaks[4]

\begin{document}
\title[Weak Tail Conditions for Local Martingales]{Weak Tail Conditions for Local Martingales}

\author{Hardy Hulley and Johannes Ruf}

\address{Hardy Hulley\\
Finance Discipline Group\\
University of Technology Sydney}

\email{hardy.hulley@uts.edu.au}

\address{Johannes Ruf\\
Department of Mathematics\\
University College London}

\email{j.ruf@ucl.ac.uk}

\thanks{This paper is dedicated to the memory of Nicola Bruti-Liberati. The authors thank Sam Cohen, Ioannis Karatzas, Kostas Kardaras, R\"udiger Kiesel, Alex Novikov, and Eckhard Platen for several helpful discussions. Johannes Ruf gratefully acknowledges the support provided by the Bruti-Liberati Visiting Fellowship Fund, and thanks the Finance Discipline Group at the University of Technology Sydney for its hospitality. He also acknowledges the generous support provided by the Oxford-Man Institute of Quantitative Finance at the University of Oxford.}

\subjclass[2010]{Primary: 60G44}

\keywords{Local martingales; uniformly integrable martingales; weak tail of the supremum}

\date{\today}

\begin{abstract}
The following conditions are necessary and sufficient for an arbitrary c\`adl\`ag local martingale to be a uniformly integrable martingale:
\begin{enumerate*}[label=(\Alph*)]
\item
The weak tail of the supremum of its modulus is zero;
\item
its jumps at the first-exit times from compact intervals converge to zero in $L^1$, on the events that those times are finite; and
\item
its almost sure limit is an integrable random variable.
\end{enumerate*}
\end{abstract}

\maketitle

\section{Introduction}
Let $(\Omega,\sigalg{F},\filt{F},\P)$ be a filtered probability space, whose filtration $\filt{F}=(\sigalg{F}_t)_{t\geq 0}$ is assumed to be right-continuous, and let $\Sf$ denote the family of finite-valued stopping times on $(\Omega,\sigalg{F},\filt{F},\P)$. Unless indicated otherwise, stochastic processes are defined on $(\Omega,\sigalg{F},\filt{F},\P)$, and are adapted to $\filt{F}$, as well as being real-valued and c\`adl\`ag. We denote the family of local martingales by $\Mloc$, while $\M$ denotes the family of uniformly integrable martingales. Our concept of a local martingale corresponds with that of \citet[Definitions~I.1.33 and I.1.45]{JS03}, which implies that $\E(|M_0|)<\infty$, for all $M\in\Mloc$.\footnote{Several authors, including \citet[Section~I.6]{Pro05} and \citet[Definition~IV.1.5]{RY99}, allow for the possibility that the initial component of a local martingale may be non-integrable. This additional generality would add a technical overhead to the results that follow, without offering any compensating advantages.} The strict inclusion $\M\subsetneq\Mloc$ gives rise to the following problem, which is the focus of our study.

\begin{problem}
\label{probsec1:MainProblem}
Given $M\in\Mloc$, formulate necessary and sufficient conditions for determining whether $M\in\M$.
\end{problem}

A promising approach to Problem~\ref{probsec1:MainProblem} focuses on the weak tails of the suprema of the moduli of local martingales, as well as the weak tails of their quadratic variations. Several previous studies have employed this approach to derive necessary and sufficient conditions under which a given local martingale is a uniformly integrable martingale. However, the results obtained so far are not completely general, since the process in question is always required to satisfy additional regularity and/or integrability conditions. We begin with a brief survey of the relevant literature.

The following argument, which traces its origin to \citet{Rao69}, represents the starting point. Consider a continuous martingale $M=(M_t)_{t\geq 0}$ satisfying $\sup_{t\geq 0}\E(|M_t|)<\infty$. In that case, Doob's martingale convergence theorem ensures that the almost sure limit $M_\infty\coloneqq M_{\infty-}$ exists and satisfies $\E(|M_\infty|)<\infty$. Let
\begin{equation}
\label{eqsec1:FirstExitTime}
\tau_\lambda\coloneqq\inf\{t\geq 0\,|\,|M_t|>\lambda\}
\end{equation}
denote the first exit-time from the compact interval $[-\lambda,\lambda]$, for all $\lambda\geq 0$. Since $M^{\tau_\lambda}\coloneqq M_{\tau_\lambda\wedge\,\cdot}$ is a bounded martingale, and hence also a uniformly integrable martingale, for all $\lambda\geq 0$, it follows that
\begin{equation*}
\E(M^{\tau_\lambda}_0)=\E(M^{\tau_\lambda}_\infty)
=\lambda\P\biggl(\sup_{t\geq 0}|M_t|>\lambda\biggr)+\E\Bigl(\indicator{\{\sup_{t\geq 0}|M_t|\leq\lambda\}}M_\infty\Bigr).
\end{equation*}
Finally, an application of the dominated convergence theorem yields
\begin{equation*}
\lim_{\lambda\uparrow\infty}\lambda\P\biggl(\sup_{t\geq 0}|M_t|>\lambda\biggr)
=\E(M_0)-\E(M_\infty),
\end{equation*}
whence $M\in\M$ if and only if $\lim_{\lambda\uparrow\infty}\lambda\P\bigl(\sup_{t\geq 0}|M_t|>\lambda\bigr)=0$. \citet{AGY80} derived this result by means of a similar argument. In addition, they demonstrated that $M\in\M$ if and only if $\lim_{\lambda\uparrow\infty}\lambda\P\bigl(\PredQVar{M}^{\sfrac{1}{2}}_\infty\geq\lambda\bigr)=0$, where $\PredQVar{M}_\infty\coloneqq\PredQVar{M}_{\infty-}$. \citet{Nov81} independently obtained the same characterisations of uniformly integrable martingales, in the context of first-passage problems.

\citet{ELY97,ELY99} and \citet{Tak99} extended the previous results, by obtaining weak tail characterisations of uniformly integrable martingales within the class of continuous local martingales, provided the processes in question meet certain integrability requirements. Further generalisations were obtained by \citet{Nov97}, and \citet{LN06}, while \citet{Kaj07,Kaj08,Kaj09} presented weak tail characterisations of uniformly integrable martingales within the class of locally square-integrable martingales. Once again, the processes under consideration must satisfy a variety of additional integrability conditions for the results to be applicable.

We contribute to the literature surveyed above by deriving a weak tail characterisation of uniformly integrable martingales that solves Problem~\ref{probsec1:MainProblem} in full generality, without restricting to local martingales of a certain type. In particular, since the suite of conditions we present is both necessary and sufficient for an arbitrary local martingale to be a uniformly integrable martingale, our result is not subject to any restrictive integrability and/or regularity constraints. In detail, we prove the following theorem.

\begin{theorem}
\label{thmsec1:UIMart}
Let $M\in\Mloc$. Then $M\in\M$ if and only if
\begin{align}
\varliminf_{\lambda\uparrow\infty}\lambda\P\biggl(\sup_{t\geq 0}|M_t|>\lambda\biggr)&=0;\tag{A}\label{eqsec1:CondA}\\
\lim_{\lambda\uparrow\infty}\E\bigl(\indicator{\{\tau_\lambda<\infty\}}|\Delta M_{\tau_\lambda}|\bigr)&=0;\qquad\text{and}\tag{B}\label{eqsec1:CondB}\\
\E\biggl(\varliminf_{t\uparrow\infty}|M_t|\biggr)&<\infty,\tag{C}\label{eqsec1:CondC}
\end{align}
where $\Delta M\coloneqq M-M_{-}$ is the jump process associated with $M$.
\end{theorem}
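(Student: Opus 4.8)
The plan is to prove the two implications separately; the forward (necessity) direction is short, while the converse carries essentially all of the work.

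\emph{Necessity.} Suppose $M\in\M$. Then $M_\infty\coloneqq\lim_{t\uparrow\infty}M_t$ exists almost surely and in $\Lp{1}$, the identity $M_t=\E(M_\infty\mid\sigalg{F}_t)$ holds for every $t\geq 0$, and almost every path of $M$ is bounded, so $\sup_{t\geq 0}|M_t|<\infty$ almost surely. Condition~\eqref{eqsec1:CondC} is then immediate, since $\varliminf_{t\uparrow\infty}|M_t|=|M_\infty|$ almost surely. For~\eqref{eqsec1:CondA} I would note that $|M_{\tau_\lambda}|\geq\lambda$ on $\{\tau_\lambda<\infty\}$ by right-continuity, apply optional sampling at $\tau_\lambda$ to the uniformly integrable martingale $M$ to get $\E\bigl(|M_{\tau_\lambda}|\indicator{\{\tau_\lambda<\infty\}}\bigr)\leq\E\bigl(|M_\infty|\indicator{\{\tau_\lambda<\infty\}}\bigr)$, and combine these with $\{\tau_\lambda<\infty\}=\{\sup_{t\geq 0}|M_t|>\lambda\}$ and dominated convergence to obtain the (stronger) conclusion $\lim_{\lambda\uparrow\infty}\lambda\P\bigl(\sup_{t\geq 0}|M_t|>\lambda\bigr)=0$. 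For~\eqref{eqsec1:CondB} I would estimate $|\Delta M_{\tau_\lambda}|\leq|M_{\tau_\lambda}|+|M_{\tau_\lambda-}|$ on $\{\tau_\lambda<\infty\}$, use that $|M_{\tau_\lambda-}|\leq\lambda$ there when $\tau_\lambda>0$ while $|M_{\tau_\lambda-}|=|M_0|$ when $\tau_\lambda=0$, and conclude from the bound just established, the integrability of $M_0$, and dominated convergence.

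\emph{Sufficiency.} Assume~\eqref{eqsec1:CondA}--\eqref{eqsec1:CondC}. The key structural observation is that, since $|M_t|\leq\lambda$ for $t<\tau_\lambda$ and $M^{\tau_\lambda}$ is constant on $\StochInterval{\tau_\lambda,\infty}$,
\begin{equation*}
\sup_{t\geq 0}\bigl|M^{\tau_\lambda}_t\bigr|\leq|M_0|+\lambda+|\Delta M_{\tau_\lambda}|\indicator{\{\tau_\lambda<\infty\}}.
\end{equation*}
By~\eqref{eqsec1:CondB} there is a $\lambda_0$ such that $\E\bigl(|\Delta M_{\tau_\lambda}|\indicator{\{\tau_\lambda<\infty\}}\bigr)<\infty$ for all $\lambda\geq\lambda_0$; for such $\lambda$ the local martingale $M^{\tau_\lambda}$ is dominated by an integrable random variable, hence is a uniformly integrable martingale (localise and apply dominated convergence). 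In particular $M^{\tau_\lambda}$ converges almost surely, and since $\{\tau_\lambda=\infty\}=\{\sup_{t\geq 0}|M_t|\leq\lambda\}$, the process $M$ converges almost surely on $\{\sup_{t\geq 0}|M_t|\leq\lambda\}$; letting $\lambda\uparrow\infty$ shows $M$ converges almost surely on $\{\sup_{t\geq 0}|M_t|<\infty\}$. Condition~\eqref{eqsec1:CondA} forces $\P\bigl(\sup_{t\geq 0}|M_t|=\infty\bigr)=0$, for otherwise $\lambda\P\bigl(\sup_{t\geq 0}|M_t|>\lambda\bigr)\to\infty$. Hence $M$ converges almost surely to a finite limit $M_\infty$, and~\eqref{eqsec1:CondC} gives $\E(|M_\infty|)=\E\bigl(\varliminf_{t\uparrow\infty}|M_t|\bigr)<\infty$.

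It then remains to identify $M$ with the uniformly integrable martingale $\E(M_\infty\mid\sigalg{F}_\cdot)$. Using~\eqref{eqsec1:CondA} I would fix $\lambda_k\uparrow\infty$ with $\lambda_k\geq\lambda_0$ and $\lambda_k\P\bigl(\sup_{t\geq 0}|M_t|>\lambda_k\bigr)\to 0$. Since $M^{\tau_{\lambda_k}}_\infty-M_\infty=\bigl(M_{\tau_{\lambda_k}}-M_\infty\bigr)\indicator{\{\tau_{\lambda_k}<\infty\}}$ and $|M_{\tau_{\lambda_k}}|\indicator{\{\tau_{\lambda_k}<\infty\}}\leq(\lambda_k+|M_0|)\indicator{\{\tau_{\lambda_k}<\infty\}}+|\Delta M_{\tau_{\lambda_k}}|\indicator{\{\tau_{\lambda_k}<\infty\}}$, the choice of $\lambda_k$, condition~\eqref{eqsec1:CondB}, the integrability of $M_0$, and dominated convergence yield $M^{\tau_{\lambda_k}}_\infty\to M_\infty$ in $\Lp{1}$. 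Passing to the limit in $M_{t\wedge\tau_{\lambda_k}}=\E\bigl(M^{\tau_{\lambda_k}}_\infty\mid\sigalg{F}_t\bigr)$---the left-hand side converging to $M_t$ almost surely and the right-hand side to $\E(M_\infty\mid\sigalg{F}_t)$ in $\Lp{1}$---gives $M_t=\E(M_\infty\mid\sigalg{F}_t)$ for every $t\geq 0$, whence $M\in\M$.

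\emph{Main obstacle.} The heart of the proof is the sufficiency direction, and within it the realisation that~\eqref{eqsec1:CondB}---an a priori purely asymptotic statement about the jumps at the exit times---already forces, for every sufficiently large $\lambda$, the integrable domination $\sup_{t\geq 0}|M^{\tau_\lambda}_t|\in\Lp{1}$ that upgrades the stopped local martingale $M^{\tau_\lambda}$ to a uniformly integrable martingale. Once this is in hand, \eqref{eqsec1:CondA} does the two remaining jobs---ruling out escape of $|M|$ to infinity and supplying the subsequence along which the stopped terminal values converge in $\Lp{1}$---while \eqref{eqsec1:CondC} is exactly integrability of the limit. The only genuinely delicate bookkeeping is the behaviour of $M_{\tau_\lambda-}$ when $\tau_\lambda=0$, which is absorbed harmlessly into the integrable term $|M_0|$.
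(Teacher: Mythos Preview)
Your proposal is correct and follows essentially the same route as the paper: for necessity you use the uniformly integrable submartingale $|M|$ exactly as the paper does, and for sufficiency you reproduce the content of the paper's Lemmas~2.1--2.3 inline (the bound $\sup_{t\geq 0}|M^{\tau_\lambda}_t|\leq|M_0|+\lambda+\indicator{\{\tau_\lambda<\infty\}}|\Delta M_{\tau_\lambda}|$, the resulting $M^{\tau_\lambda}\in\M$, and the $\Lp{1}$-convergence $M^{\tau_{\lambda_k}}_\infty\to M_\infty$). The only noteworthy difference is in the last step: the paper passes through Fatou's lemma to obtain $\varliminf_{\lambda}\E(|M^{\tau_\lambda}_\infty-M_\infty|\mid\sigalg{F}_t)=0$ and then extracts an $\sigalg{F}_t$-measurable random sequence $(\lambda_n)$, whereas you simply pick a deterministic subsequence $\lambda_k\uparrow\infty$ realising the $\varliminf$ in~\eqref{eqsec1:CondA} and use that conditional expectation is an $\Lp{1}$-contraction---this is a mild but genuine simplification of the paper's argument.
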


It turns out that Conditions~\eqref{eqsec1:CondA}--\eqref{eqsec1:CondC} hold, with ordinary limits instead of limits inferior, if $M\in\M$. We may thus replace the limits inferior with ordinary limits in Conditions~\eqref{eqsec1:CondA} and \eqref{eqsec1:CondC}, without affecting the validity of Theorem~\ref{thmsec1:UIMart}.

Condition~\eqref{eqsec1:CondA} falls within the tradition of weak-tail characterisations of uniformly integrable martingales surveyed above, while Condition~\eqref{eqsec1:CondC} also appears in \citet{ruf15}. There it is shown that a local martingale $M$ is a uniformly integrable martingale if and only if $\E(|M_\tau|)<\infty$ and $\E(M_\tau)=\E(M_0)$, for all $\tau\in\Sf$, and Condition~\eqref{eqsec1:CondC} holds. Condition~\eqref{eqsec1:CondB} does not appear to have been considered before in the context of Problem~\ref{probsec1:MainProblem}.

Theorem~\ref{thmsec1:UIMart} also provides necessary and sufficient criteria for determining whether a given local martingale is a martingale, or a strict local martingale. To appreciate this, recall that $M\in\Mloc$ is a martingale if and only if $M^t\coloneqq M_{t\wedge\,\cdot}\in\M$, for all $t\geq 0$. Applying this observation to Theorem~\ref{thmsec1:UIMart} yields the following corollary.

\begin{corollary}
\label{corsec1:Martingales}
Let $M\in\Mloc$. Then $M$ is a martingale if and only if
\begin{align}
\varliminf_{\lambda\uparrow\infty}\lambda\P\biggl(\sup_{s\in[0,t]}|M_s|>\lambda\biggr)&=0;\tag{$\text{A}^\prime$}\label{eqsec1:CondA'}\\
\lim_{\lambda\uparrow\infty}\E\bigl(\indicator{\{\tau_\lambda\leq t\}}|\Delta M_{\tau_\lambda}|\bigr)&=0;\qquad\text{and}\tag{$\text{B}^\prime$}\label{eqsec1:CondB'}\\
\E(|M_t|)&<\infty,\tag{$\text{C}^\prime$}\label{eqsec1:CondC'}
\end{align}
for all $t\geq 0$.
\end{corollary}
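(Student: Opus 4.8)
The plan is to deduce Corollary~\ref{corsec1:Martingales} from Theorem~\ref{thmsec1:UIMart} by applying the latter to the stopped process $M^t$ for each fixed $t\geq 0$, and then to check that Conditions~\eqref{eqsec1:CondA}--\eqref{eqsec1:CondC} for $M^t$ translate exactly into Conditions~\eqref{eqsec1:CondA'}--\eqref{eqsec1:CondC'} for $M$ at level $t$. The starting point is the standard fact, recalled just before the corollary, that $M\in\Mloc$ is a martingale if and only if $M^t\in\M$ for every $t\geq 0$; since $M\in\Mloc$ implies $M^t\in\Mloc$, Theorem~\ref{thmsec1:UIMart} applies to each $M^t$.

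First I would record that $\sup_{s\geq 0}|M^t_s| = \sup_{s\in[0,t]}|M_s|$, which immediately identifies Condition~\eqref{eqsec1:CondA} for $M^t$ with Condition~\eqref{eqsec1:CondA'} for $M$ at time $t$, and likewise identifies $\varliminf_{s\uparrow\infty}|M^t_s| = |M_t|$, turning Condition~\eqref{eqsec1:CondC} for $M^t$ into Condition~\eqref{eqsec1:CondC'}. Next I would treat Condition~\eqref{eqsec1:CondB}. Write $\tau_\lambda^t$ for the first-exit time of $M^t$ from $[-\lambda,\lambda]$; then $\tau_\lambda^t = \tau_\lambda$ on $\{\tau_\lambda\leq t\}$ and $\tau_\lambda^t = \infty$ (or at least $\geq t$ with the process already stopped) otherwise, so that $\{\tau_\lambda^t<\infty\}$ coincides, up to the behaviour at the single instant $t$, with $\{\tau_\lambda\leq t\}$, and on that event $\Delta M^t_{\tau_\lambda^t} = \Delta M_{\tau_\lambda}$. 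The one point needing care is the jump of $M^t$ \emph{at} time $t$: since $M^t$ is constant after $t$, it has no jump at $t$, so $\Delta M^t$ vanishes on $(t,\infty)$ and agrees with $\indicator{[0,t]}\Delta M$ on $[0,\infty)$; hence $\indicator{\{\tau_\lambda^t<\infty\}}|\Delta M^t_{\tau_\lambda^t}| = \indicator{\{\tau_\lambda\leq t\}}|\Delta M_{\tau_\lambda}|$ almost surely for $\lambda$ large enough that exit cannot occur at $t$ via the stopping itself, and taking expectations and letting $\lambda\uparrow\infty$ gives the equivalence of \eqref{eqsec1:CondB} for $M^t$ with \eqref{eqsec1:CondB'} for $M$.

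Assembling these three identifications, Theorem~\ref{thmsec1:UIMart} applied to $M^t$ says $M^t\in\M$ if and only if \eqref{eqsec1:CondA'}, \eqref{eqsec1:CondB'} and \eqref{eqsec1:CondC'} hold at that particular $t$; quantifying over all $t\geq 0$ and invoking the martingale criterion $M$ is a martingale $\iff$ $M^t\in\M$ for all $t$ yields the corollary. The main obstacle, though a minor one, is the bookkeeping around the exit time and jump of the stopped process at the terminal time $t$: one must verify that stopping does not spuriously create or destroy an exit event or a jump at $t$, which is where the c\`adl\`ag convention and the definition $\Delta M = M - M_{-}$ are used. A clean way to sidestep any ambiguity is to note that both sides of each equivalence are monotone/stable in $\lambda$ for $\lambda$ beyond $\sup_{s\in[0,t]}|M_{s-}|$ on the relevant event, so the $\lambda\uparrow\infty$ limits are unaffected by the finitely-delicate behaviour at $t$.
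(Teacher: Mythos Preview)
Your approach matches the paper's exactly: it simply notes before the corollary that $M$ is a martingale iff $M^t\in\M$ for all $t\geq 0$ and declares that applying Theorem~\ref{thmsec1:UIMart} to each $M^t$ gives the result, without writing out the translation of the three conditions that you supply. One slip to fix: $M^t$ being constant on $[t,\infty)$ does \emph{not} force $\Delta M^t_t=0$; in fact $(M^t)_{t-}=M_{t-}$ so $\Delta M^t_t=\Delta M_t$, which is precisely what your own formula $\Delta M^t=\indicator{[0,t]}\Delta M$ records, and hence the identity $\indicator{\{\tau_\lambda^t<\infty\}}\,|\Delta M^t_{\tau_\lambda^t}|=\indicator{\{\tau_\lambda\leq t\}}\,|\Delta M_{\tau_\lambda}|$ holds for every $\lambda$ with no ``$\lambda$ large enough'' hedge required.
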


The remainder of the article is structured as follows. We prove Theorem~\ref{thmsec1:UIMart} in Section~\ref{sec2}, while Section~\ref{sec3} presents several revealing examples. In particular, we construct examples of local martingales that are not uniformly integrable martingales, due to the selective failure of precisely one of Conditions~\eqref{eqsec1:CondA}--\eqref{eqsec1:CondC}. Finally, Section~\ref{sec4} exploits Theorem~\ref{thmsec1:UIMart} to present a systematic procedure for constructing uniformly integrable martingales whose suprema are not integrable.
\section{The Proof of Theorem~\ref{thmsec1:UIMart}}
\label{sec2}
In the lead-up to the proof of Theorem~\ref{thmsec1:UIMart}, we first explore some of the consequences of Conditions~\eqref{eqsec1:CondA}--\eqref{eqsec1:CondC}. To begin with, recall that a continuous local martingale that is stopped when first it leaves a compact interval is a bounded local martingale, and hence also a uniformly integrable martingale. The following lemma generalises this result.

\begin{lemma}
\label{lemsec2:Lemma1}
Suppose $M\in\Mloc$ satisfies Condition~\eqref{eqsec1:CondB}. Then $M^{\tau_\lambda}\in\M$, for all $\lambda\geq 0$.
\end{lemma}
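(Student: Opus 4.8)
The plan is to fix $\lambda \geq 0$ and show that $M^{\tau_\lambda}$ is a uniformly integrable martingale. The natural starting point is that $M \in \Mloc$ means $M^{\tau_\lambda} \in \Mloc$ as well (stopping preserves the local martingale property), so there is a localizing sequence $(\sigma_n)_{n \in \N}$ of stopping times increasing to infinity with $(M^{\tau_\lambda})^{\sigma_n} \in \M$ for each $n$. The issue is to pass to the limit. First I would observe that $M^{\tau_\lambda}$ is bounded by $\lambda$ on the stochastic interval $\StochInterval{0, \tau_\lambda}$, i.e. $|M_{t \wedge \tau_\lambda}| \leq \lambda$ whenever $t < \tau_\lambda$ or $\tau_\lambda$ does not occur strictly before $t$ — more precisely $|M_t \indicator{\{t < \tau_\lambda\}}| \leq \lambda$ by left-continuity of $M_-$ and the definition of $\tau_\lambda$ as a first-exit time. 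The only place $M^{\tau_\lambda}$ can be large is exactly at the exit time $\tau_\lambda$ itself, where $|M_{\tau_\lambda}| \leq \lambda + |\Delta M_{\tau_\lambda}|$.

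**The key estimate.**

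This suggests the decomposition $|M_{t \wedge \tau_\lambda}| \leq \lambda + \indicator{\{\tau_\lambda \leq t\}} |\Delta M_{\tau_\lambda}| \leq \lambda + \indicator{\{\tau_\lambda < \infty\}} |\Delta M_{\tau_\lambda}|$, valid for all $t \geq 0$. Hence
\begin{equation*}
\sup_{t \geq 0} |M^{\tau_\lambda}_t| \leq \lambda + \indicator{\{\tau_\lambda < \infty\}} |\Delta M_{\tau_\lambda}|.
\end{equation*}
Now Condition~\eqref{eqsec1:CondB} does not immediately say that the right-hand side is integrable — it says that $\E(\indicator{\{\tau_\mu < \infty\}}|\Delta M_{\tau_\mu}|) \to 0$ as $\mu \uparrow \infty$. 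But for the fixed $\lambda$ at hand, I claim $\E(\indicator{\{\tau_\lambda < \infty\}}|\Delta M_{\tau_\lambda}|) < \infty$: indeed $\Delta M_{\tau_\lambda} = \Delta M^{\tau_\mu}_{\tau_\lambda}$ for $\mu > \lambda$, and the jump of a local martingale at a stopping time where the process is explos-free is controlled. The cleanest route is: $M^{\tau_\lambda}$ localized by $\sigma_n$ gives $\E(\sup_{t} |(M^{\tau_\lambda})^{\sigma_n}_t|) < \infty$ is not automatic either, so instead I would use the $L^1$-localizing property directly — take a sequence $(\rho_n)$ reducing $M^{\tau_\lambda}$ such that $(M^{\tau_\lambda})^{\rho_n}$ is a true UI martingale, apply optional stopping, and let $n \to \infty$ using the above pathwise bound together with the finiteness of $\E(\indicator{\{\tau_\lambda < \infty\}}|\Delta M_{\tau_\lambda}|)$.

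**Establishing integrability of the jump at $\tau_\lambda$.**

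The finiteness $\E(\indicator{\{\tau_\lambda < \infty\}}|\Delta M_{\tau_\lambda}|) < \infty$ for each fixed $\lambda$ is really the technical heart, and I expect this to be the main obstacle. Condition~\eqref{eqsec1:CondB} is an asymptotic statement, so it gives finiteness only for all sufficiently large $\lambda$; but for small $\lambda$ one argues separately. A clean way: for any $\lambda \geq 0$ and any $\mu > \lambda$, on $\{\tau_\lambda < \infty\}$ one has $\tau_\lambda \leq \tau_\mu$ and $|M_{\tau_\lambda -}| \leq \lambda$, so $|\Delta M_{\tau_\lambda}| = |M_{\tau_\lambda} - M_{\tau_\lambda-}| \leq |M_{\tau_\lambda}| + \lambda$. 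Since $M^{\tau_\mu}$ is the stopped process and $M^{\tau_\mu}$ admits a reducing sequence, for a localizing sequence $(\sigma_n)$ with $(M^{\tau_\mu})^{\sigma_n} \in \M$ we get by Doob/Jensen that $\E(|M_{\tau_\lambda \wedge \sigma_n}|) \leq \E(|M_{\tau_\mu \wedge \sigma_n}|)$ is bounded, and then Fatou gives $\E(|M_{\tau_\lambda}| \indicator{\{\tau_\lambda < \infty\}}) < \infty$ provided $\liminf_n \E(|M_{\tau_\mu \wedge \sigma_n}|) < \infty$, which holds because $\E(|M_{\tau_\mu \wedge \sigma_n}|) = \E(|(M^{\tau_\mu})^{\sigma_n}_\infty|)$ and this is $\leq \liminf$... — here I would instead simply invoke that $M^{\tau_\mu}$ stopped at $\sigma_n$ has $\E(|M^{\tau_\mu}_{\sigma_n}|) \leq \E(|M_0|) + 2\lambda + 2\E(\indicator{\{\tau_\mu<\infty\}}|\Delta M_{\tau_\mu}|)$ by the same pathwise bound bootstrapped, giving a uniform bound, hence $\E(|\Delta M_{\tau_\lambda}|\indicator{\{\tau_\lambda<\infty\}}) < \infty$. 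Once integrability of the dominating random variable $\lambda + \indicator{\{\tau_\lambda<\infty\}}|\Delta M_{\tau_\lambda}|$ is secured, $M^{\tau_\lambda}$ is dominated in $L^1$ by an integrable random variable, so it is a class-(D) local martingale, hence a uniformly integrable martingale, completing the proof.
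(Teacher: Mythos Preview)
Your approach is essentially the paper's: bound $\sup_{t\geq 0}|M^{\tau_\lambda}_t|$ pathwise by $\lambda$ plus the jump at $\tau_\lambda$, use Condition~\eqref{eqsec1:CondB} to make the dominating random variable integrable, and conclude that $M^{\tau_\lambda}$ is class~(D), hence in $\M$. Two points where the paper is cleaner. First, your bound $\sup_t|M^{\tau_\lambda}_t|\leq\lambda+\indicator{\{\tau_\lambda<\infty\}}|\Delta M_{\tau_\lambda}|$ fails when $|M_0|>\lambda$ (then $\tau_\lambda=0$ and $\Delta M_0=0$); the paper writes $\E(|M_0|)+\lambda+\E(\indicator{\{\tau_\lambda<\infty\}}|\Delta M_{\tau_\lambda}|)$ to cover this. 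Second, and more substantively, your treatment of small $\lambda$ is needlessly convoluted: the paper simply observes that Condition~\eqref{eqsec1:CondB} furnishes some $\lambda_*$ with $\E(\indicator{\{\tau_\lambda<\infty\}}|\Delta M_{\tau_\lambda}|)<\infty$ for all $\lambda\geq\lambda_*$, proves $M^{\tau_{\lambda_*}}\in\M$ by the class~(D) argument, and then for $\lambda<\lambda_*$ uses $\tau_\lambda\leq\tau_{\lambda_*}$ and stability of $\M$ under stopping to get $M^{\tau_\lambda}=(M^{\tau_{\lambda_*}})^{\tau_\lambda}\in\M$ in one line. Your bootstrap via localizing sequences and Fatou can be made to work but is unnecessary once this observation is in hand.
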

\begin{proof}
Condition~\eqref{eqsec1:CondB} guarantees the existence of a $\lambda_*\geq 0$, such that $\E\bigl(\indicator{\{\tau_\lambda<\infty\}}|\Delta M_{\tau_\lambda}|\bigr)<\infty$, for all $\lambda\geq\lambda_*$, from which it follows that
\begin{equation*}
\E\biggl(\sup_{t\geq 0}|M^{\tau_\lambda}_t|\biggr)
\leq\E(|M_0|)+\lambda+\E\bigl(\indicator{\{\tau_\lambda<\infty\}}|\Delta M_{\tau_\lambda}|\bigr)
<\infty.
\end{equation*}
Given $\lambda\geq\lambda_*$, an application of the dominated convergence theorem then yields
\begin{equation*}
\lim_{K\uparrow\infty}\sup_{\sigma\in\Sf}\E\biggl(\indicator{\left\{\left|M^{\tau_\lambda}_\sigma\right|\geq K\right\}}|M^{\tau_\lambda}_\sigma|\biggr)
\leq\lim_{K\uparrow\infty}\E\biggl(\indicator{\left\{\sup_{t\geq 0}\left|M^{\tau_\lambda}_t\right|\geq K\right\}}\sup_{t\geq 0}|M^{\tau_\lambda}_t|\biggr)=0,
\end{equation*}
since $|M^{\tau_\lambda}_\sigma|\leq\sup_{t\geq 0}|M^{\tau_\lambda}_t|$, for all $\sigma\in\Sf$. In other words, $M^{\tau_\lambda}$ is a local martingale that belongs to class~(D) \citep[see e.g.][Definition~I.1.46]{JS03}, and is thus a uniformly integrable martingale \citep[see e.g.][Proposition~I.1.47]{JS03}. On the other hand, if $\lambda\in[0,\lambda_*]$, then $\tau_\lambda\leq\tau_{\lambda_*}$, whence  $M^{\tau_\lambda}=M^{\tau_{\lambda_*}\wedge\tau_\lambda}\in\M$, since $M^{\tau_{\lambda_*}}\in\M$ and the family of uniformly integrable martingales is stable under stopping.
\end{proof}

Of course, a local martingale may satisfy the conclusion of Lemma~\ref{lemsec2:Lemma1} without necessarily satisfying Condition~\eqref{eqsec1:CondB}. To illustrate this point, consider a non-negative local martingale $M\in\Mloc$ that does not satisfy Condition~\eqref{eqsec1:CondB} (see Example~\ref{exsec3:CondBFails} below), and fix $\lambda\geq 0$. Since $M$ is then a non-negative supermartingale, the random variable $M_{\tau_\lambda}\geq 0$ is well-defined and satisfies $\E(M_{\tau_\lambda})\leq\E(M_0)<\infty$ \citep[see e.g.][Theorem~I.1.39]{JS03}. Moreover,
\begin{equation*}
|M^{\tau_\lambda}_t|=M^{\tau_\lambda}_t
=\indicator{\{\tau_\lambda\leq t\}}M_{\tau_\lambda}+\indicator{\{\tau_\lambda>t\}}M_t
\leq M_{\tau_\lambda}+\lambda,
\end{equation*}
for all $t\geq 0$. These two observations imply that
\begin{equation*}
\E\biggl(\sup_{t\geq 0}|M^{\tau_\lambda}_t|\biggr)\leq\E(M_{\tau_\lambda})+\lambda<\infty,
\end{equation*}
from which it follows that $M^{\tau_\lambda}\in\M$.

Next, we establish two useful facts about local martingales for which Conditions~\eqref{eqsec1:CondA}--\eqref{eqsec1:CondB} hold, one of which is that such processes possess real-valued almost-sure limits.

\begin{lemma}
\label{lemsec2:Lemma2}
Suppose $M\in\Mloc$ satisfies Conditions~\eqref{eqsec1:CondA}--\eqref{eqsec1:CondB}. Then
\begin{equation*}
\varliminf_{\lambda\uparrow\infty}\E\bigl(\indicator{\{\tau_\lambda<\infty\}}|M^{\tau_\lambda}_\infty|\bigr)=0.
\end{equation*}
Moreover, the almost sure limit $M_\infty\coloneqq M_{\infty-}$ exists and satisfies $M_\infty\in\R$.
\end{lemma}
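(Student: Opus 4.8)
The plan is to lean on Lemma~\ref{lemsec2:Lemma1}: Condition~\eqref{eqsec1:CondB} forces $M^{\tau_\lambda}\in\M$ for every $\lambda\geq 0$, so each $M^{\tau_\lambda}$ converges almost surely and in $L^1$ to an integrable limit $M^{\tau_\lambda}_\infty$. Since the path of $M^{\tau_\lambda}$ is constant on $[\tau_\lambda,\infty)$, one has $\indicator{\{\tau_\lambda<\infty\}}M^{\tau_\lambda}_\infty=\indicator{\{\tau_\lambda<\infty\}}M_{\tau_\lambda}$, and hence the first assertion is equivalent to $\varliminf_{\lambda\uparrow\infty}\E\bigl(\indicator{\{\tau_\lambda<\infty\}}|M_{\tau_\lambda}|\bigr)=0$.

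To estimate this, I would split $M_{\tau_\lambda}=M_{(\tau_\lambda)-}+\Delta M_{\tau_\lambda}$ on $\{\tau_\lambda<\infty\}$. On $\{0<\tau_\lambda<\infty\}$ the left limit obeys $|M_{(\tau_\lambda)-}|\leq\lambda$, because $|M_t|\leq\lambda$ for every $t<\tau_\lambda$; on the remaining event $\{\tau_\lambda=0\}\subseteq\{|M_0|\geq\lambda\}$ one simply has $M_{\tau_\lambda}=M_0$. Recalling that $\{\tau_\lambda<\infty\}=\{\sup_{t\geq 0}|M_t|>\lambda\}$, these observations combine into
\begin{equation*}
\E\bigl(\indicator{\{\tau_\lambda<\infty\}}|M_{\tau_\lambda}|\bigr)\leq\E\bigl(\indicator{\{|M_0|\geq\lambda\}}|M_0|\bigr)+\lambda\,\P\biggl(\sup_{t\geq 0}|M_t|>\lambda\biggr)+\E\bigl(\indicator{\{\tau_\lambda<\infty\}}|\Delta M_{\tau_\lambda}|\bigr).
\end{equation*}
The first term on the right converges to $0$ by dominated convergence, since $\E(|M_0|)<\infty$; the third converges to $0$ by Condition~\eqref{eqsec1:CondB}; and the second has limit inferior equal to $0$ by Condition~\eqref{eqsec1:CondA}. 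As the left-hand side is non-negative, its limit inferior is therefore $0$, which is the first assertion.

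For the second assertion, Condition~\eqref{eqsec1:CondA} provides a sequence $\lambda_n\uparrow\infty$ along which $\lambda_n\,\P(\sup_{t\geq 0}|M_t|>\lambda_n)\to 0$, hence $\P(\sup_{t\geq 0}|M_t|>\lambda_n)\to 0$, and letting $n\uparrow\infty$ in the decreasing sequence of events $\{\sup_{t\geq 0}|M_t|>\lambda_n\}$ yields $\P(\sup_{t\geq 0}|M_t|=\infty)=0$. On the almost sure event $\{\sup_{t\geq 0}|M_t|<\infty\}$ each $\omega$ lies in $\{\tau_n=\infty\}$ for some integer $n$, so the path $M(\omega)$ coincides with that of $M^{\tau_n}(\omega)$; since each of the countably many uniformly integrable martingales $M^{\tau_n}$ converges almost surely to a finite limit, it follows that $M_\infty\coloneqq M_{\infty-}$ exists and lies in $\R$ almost surely. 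There is no serious obstacle once Lemma~\ref{lemsec2:Lemma1} is in hand; the crux is the (mildly delicate) observation that Condition~\eqref{eqsec1:CondA} alone, via a subsequence and continuity of measure, already forces $\sup_{t\geq 0}|M_t|<\infty$ almost surely, and the only bookkeeping that needs care is the use of the left limit $M_{(\tau_\lambda)-}$ in the jump decomposition and the asymptotically negligible event $\{\tau_\lambda=0\}$.
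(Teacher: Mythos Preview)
Your proof is correct and essentially identical to the paper's. The only cosmetic difference is that the paper uses the single pointwise bound $\indicator{\{\tau_\lambda<\infty\}}|M^{\tau_\lambda}_\infty|\leq\indicator{\{\tau_\lambda<\infty\}}\bigl(|M_0|+\lambda+|\Delta M_{\tau_\lambda}|\bigr)$, which absorbs your $\{\tau_\lambda=0\}$ case into the $|M_0|$ term without splitting, and then deduces the existence of $M_\infty$ from the same observation that $\P(\tau_\lambda=\infty)\uparrow 1$.
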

\begin{proof}
Note that the almost sure limit $M^{\tau_\lambda}_\infty\coloneqq M^{\tau_\lambda}_{\infty-}$ exists and satisfies $M^{\tau_\lambda}_\infty\in\R$, for all $\lambda\geq 0$, as a result of Lemma~\ref{lemsec2:Lemma1}. Now observe that
\begin{equation*}
\begin{split}
\varliminf_{\lambda\uparrow\infty}\E\bigl(&\indicator{\{\tau_\lambda<\infty\}}|M^{\tau_\lambda}_\infty|\bigr)
\leq\varliminf_{\lambda\uparrow\infty}\E\bigl(\indicator{\{\tau_\lambda<\infty\}}(|M_0|+\lambda+|\Delta M_{\tau_\lambda}|)\bigr)\\
&\leq\lim_{\lambda\uparrow\infty}\E\bigl(\indicator{\{\tau_\lambda<\infty\}}|M_0|\bigr)+\varliminf_{\lambda\uparrow\infty}\lambda\P\biggl(\sup_{t\geq 0}|M_t|>\lambda\biggr)+\lim_{\lambda\uparrow\infty}\E\bigl(\indicator{\{\tau_\lambda<\infty\}}|\Delta M_{\tau_\lambda}|\bigr)
=0,
\end{split}
\end{equation*}
by virtue of the dominated convergence theorem, and a direct application of Conditions~\eqref{eqsec1:CondA}--\eqref{eqsec1:CondB}. Given $\lambda\geq 0$, it also follows that
\begin{equation*}
\indicator{\{\tau_\lambda=\infty\}}M_{\infty-}
=\indicator{\{\tau_\lambda=\infty\}}M^{\tau_\lambda}_{\infty-}
=\indicator{\{\tau_\lambda=\infty\}}M^{\tau_\lambda}_\infty\in\R,
\end{equation*}
whence $\{M_{\infty-}\in\R\}\supseteq\{\tau_\lambda=\infty\}$. Consequently,
\begin{equation*}
\P(M_{\infty-}\in\R)\geq\lim_{\lambda\uparrow\infty}\P(\tau_\lambda=\infty)=1,
\end{equation*}
since Condition~\eqref{eqsec1:CondA} implies that $\lim_{\lambda\uparrow\infty}\P(\tau_\lambda<\infty)=0$. That is to say, the almost sure limit $M_\infty\coloneqq M_{\infty-}$ exists and satisfies $M_\infty\in\R$.
\end{proof}

Finally, we establish a convergence result that is used in the proof of Theorem~\ref{thmsec1:UIMart} to show that Conditions~\eqref{eqsec1:CondA}--\eqref{eqsec1:CondC} are sufficient for a local martingale to be a uniformly integrable martingale.

\begin{lemma}
\label{eqsec1:Lemma3}
Suppose $M\in\Mloc$ satisfies Conditions~\eqref{eqsec1:CondA}--\eqref{eqsec1:CondC}. Then $M_\infty\coloneqq M_{\infty-}$ exists, and
\begin{equation*}
\varliminf_{\lambda\uparrow\infty}\E(|M^{\tau_\lambda}_\infty-M_\infty|)=0.
\end{equation*}
\end{lemma}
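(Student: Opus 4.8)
The plan is to control $M^{\tau_\lambda}_\infty - M_\infty$ by splitting on the event $\{\tau_\lambda < \infty\}$ and its complement. On $\{\tau_\lambda = \infty\}$ the stopped process agrees with $M$ for all time, so $M^{\tau_\lambda}_\infty = M_\infty$ there and that part contributes nothing. On $\{\tau_\lambda < \infty\}$ we have $M^{\tau_\lambda}_\infty = M_{\tau_\lambda}$, so
\begin{equation*}
\E\bigl(|M^{\tau_\lambda}_\infty - M_\infty|\bigr)
= \E\bigl(\indicator{\{\tau_\lambda < \infty\}}|M_{\tau_\lambda} - M_\infty|\bigr)
\leq \E\bigl(\indicator{\{\tau_\lambda < \infty\}}|M^{\tau_\lambda}_\infty|\bigr)
+ \E\bigl(\indicator{\{\tau_\lambda < \infty\}}|M_\infty|\bigr).
\end{equation*}
First I would note that Lemma~\ref{lemsec2:Lemma2} applies, so $M_\infty$ exists and is real-valued, and $\varliminf_{\lambda\uparrow\infty}\E(\indicator{\{\tau_\lambda<\infty\}}|M^{\tau_\lambda}_\infty|) = 0$; pick a sequence $\lambda_n \uparrow \infty$ along which this liminf is attained as an honest limit. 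The first term on the right then tends to zero along $(\lambda_n)$ by construction.

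For the second term, the key observation is that Condition~\eqref{eqsec1:CondA} gives $\P(\tau_\lambda < \infty) \to 0$ as $\lambda \uparrow \infty$ (this was already extracted inside the proof of Lemma~\ref{lemsec2:Lemma2}), so $\indicator{\{\tau_\lambda < \infty\}} \to 0$ in probability, hence almost surely along a further subsequence. Now Condition~\eqref{eqsec1:CondC} enters: since $|M_\infty| = \varliminf_{t\uparrow\infty}|M_t|$ is integrable, the family $\{\indicator{\{\tau_\lambda<\infty\}}|M_\infty|\}_{\lambda\geq 0}$ is dominated by the fixed integrable random variable $|M_\infty|$, so the dominated convergence theorem yields $\E(\indicator{\{\tau_{\lambda_n}<\infty\}}|M_\infty|) \to 0$ (passing to the subsequence if necessary, and then noting that the original sequence already converges since it is bounded above by a null-limiting quantity — or simply phrase everything along one subsequence, which suffices for the stated liminf). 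Combining the two pieces gives $\varliminf_{\lambda\uparrow\infty}\E(|M^{\tau_\lambda}_\infty - M_\infty|) = 0$.

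The only mild subtlety — and the place to be careful — is the interplay between the liminf in Lemma~\ref{lemsec2:Lemma2} and the (full) limit coming from Conditions~\eqref{eqsec1:CondA} and \eqref{eqsec1:CondC}: one must first fix the subsequence $(\lambda_n)$ realising the liminf of $\E(\indicator{\{\tau_\lambda<\infty\}}|M^{\tau_\lambda}_\infty|)$, and only then invoke dominated convergence for the $|M_\infty|$ term along that same subsequence. Because $\E(\indicator{\{\tau_\lambda<\infty\}}|M_\infty|)$ actually converges to $0$ along the \emph{whole} net $\lambda\uparrow\infty$ (it is monotone-ish: $\tau_\lambda$ is nondecreasing in $\lambda$, so $\{\tau_\lambda<\infty\}$ is nonincreasing, hence $\indicator{\{\tau_\lambda<\infty\}}|M_\infty| \downarrow 0$ pointwise and monotone convergence applies directly), no diagonalisation is really needed — the second term vanishes unconditionally as $\lambda\uparrow\infty$, and it is only the first term that forces us to pass to a subsequence. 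I expect no genuine obstacle here; the proof is essentially a bookkeeping exercise combining Lemma~\ref{lemsec2:Lemma2} with Condition~\eqref{eqsec1:CondC}.
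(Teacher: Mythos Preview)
Your proposal is correct and follows essentially the same route as the paper: split on $\{\tau_\lambda<\infty\}$, bound by $\E(\indicator{\{\tau_\lambda<\infty\}}|M^{\tau_\lambda}_\infty|)+\E(\indicator{\{\tau_\lambda<\infty\}}|M_\infty|)$, invoke Lemma~\ref{lemsec2:Lemma2} for the first term and dominated convergence (via Conditions~\eqref{eqsec1:CondA} and \eqref{eqsec1:CondC}) for the second. The paper avoids your subsequence bookkeeping by writing the one-line inequality $\varliminf_\lambda(a_\lambda+b_\lambda)\leq\varliminf_\lambda a_\lambda+\lim_\lambda b_\lambda$ directly, but the content is identical.
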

\begin{proof}
An application of the dominated convergence theorem gives
\begin{equation*}
\E\Big(\lim_{\lambda\uparrow\infty}\indicator{\{\tau_\lambda<\infty\}}\Bigr)
=\lim_{\lambda\uparrow\infty}\P(\tau_\lambda<\infty)
=0,
\end{equation*}
by virtue of Condition~\eqref{eqsec1:CondA}, from which it follows that $\lim_{\lambda\uparrow\infty}\indicator{\{\tau_\lambda<\infty\}}=0$. Another application of the dominated convergence theorem then yields
\begin{equation*}
\lim_{\lambda\uparrow\infty}\E\bigl(\indicator{\{\tau_\lambda<\infty\}}|M_\infty|\bigr)=0,
\end{equation*}
since Lemma~\ref{lemsec2:Lemma2} and Condition~\eqref{eqsec1:CondC} ensure that $M_\infty\coloneqq M_{\infty-}$ exists and satisfies $\E(|M_\infty|)<\infty$. Finally, we observe that
\begin{equation*}
\begin{split}
\varliminf_{\lambda\uparrow\infty}\E(|M^{\tau_\lambda}_\infty-M_\infty|)
&=\varliminf_{\lambda\uparrow\infty}\E\bigl(\indicator{\{\tau_\lambda<\infty\}}|M^{\tau_\lambda}_\infty-M_\infty|\bigr)\\
&\leq\varliminf_{\lambda\uparrow\infty}\E\bigl(\indicator{\{\tau_\lambda<\infty\}}|M^{\tau_\lambda}_\infty|\bigr)+\lim_{\lambda\uparrow\infty}\E\bigl(\indicator{\{\tau_\lambda<\infty\}}|M_\infty|\bigr)
=0,
\end{split}
\end{equation*}
by virtue of Lemma~\ref{lemsec2:Lemma2} and the previous argument.
\end{proof}

The proof of Theorem~\ref{thmsec1:UIMart} verifies Conditions~\eqref{eqsec1:CondA}--\eqref{eqsec1:CondC} directly, for any uniformly integrable martingale, before using Lemma~\ref{eqsec1:Lemma3} to demonstrate that a local martingale satisfying those conditions is a uniformly integrable martingale.

\begin{proof}[Proof of Theorem~\ref{thmsec1:UIMart}]
$(\Rightarrow)$~Suppose $M\in\M$, in which case Condition~\eqref{eqsec1:CondC} holds immediately, since $M_\infty\coloneqq M_{\infty-}$ exists and satisfies $\E(|M_\infty|)<\infty$. Moreover, $|M|$ is a uniformly integrable submartingale, which implies that
\begin{equation}
\label{eqthmsec1:UIMart1}
\begin{split}
\E(|M_\infty|)\geq\E(|M_{\tau_\lambda}|)
&=\E\bigl(\indicator{\{\tau_\lambda<\infty\}}|M_{\tau_\lambda}|\bigr)+\E\bigl(\indicator{\{\tau_\lambda=\infty\}}|M_\infty|\bigr)\\
&\geq\lambda\P(\tau_\lambda<\infty)+\E\bigl(\indicator{\{\tau_\lambda=\infty\}}|M_\infty|\bigr),
\end{split}
\end{equation}
for all $\lambda\geq 0$. Next, by applying the monotone convergence theorem, followed by Doob's maximal inequalities, we obtain
\begin{equation}
\label{eqthmsec1:UIMart2}
\E\Bigl(\lim_{\lambda\uparrow\infty}\indicator{\{\tau_\lambda=\infty\}}\Bigr)
=\lim_{\lambda\uparrow\infty}\P(\tau_\lambda=\infty)
=1-\lim_{\lambda\uparrow\infty}\P\biggl(\sup_{t\geq 0}|M_t|>\lambda\biggr)\\
\geq 1-\lim_{\lambda\uparrow\infty}\frac{\E(|M_\infty|)}{\lambda}
=1,
\end{equation}
from which $\lim_{\lambda\uparrow\infty}\indicator{\{\tau_\lambda=\infty\}}=1$ follows. Combining this with \eqref{eqthmsec1:UIMart1} gives
\begin{equation*}
\lim_{\lambda\uparrow\infty}\lambda\P\biggl(\sup_{t\geq 0}|M_t|>\lambda\biggr)
=\lim_{\lambda\uparrow\infty}\lambda\P(\tau_\lambda<\infty)
\leq\E(|M_\infty|)-\lim_{\lambda\uparrow\infty}\E\bigl(\indicator{\{\tau_\lambda=\infty\}}|M_\infty|\bigr)
=0,
\end{equation*}
by an application of the monotone convergence theorem. In other words, Condition~\eqref{eqsec1:CondA} holds. Finally, the inequality $|\Delta M_{\tau_\lambda}|\leq 2|M_{\tau_\lambda}|$, for all $\lambda\geq 0$, together with the fact that $|M|$ is a uniformly integrable submartingale, yield
\begin{equation*}
\begin{split}
\lim_{\lambda\uparrow\infty}\E\bigl(\indicator{\{\tau_\lambda<\infty\}}|\Delta M_{\tau_\lambda}|\bigr)
\leq 2\lim_{\lambda\uparrow\infty}\E\bigl(\indicator{\{\tau_\lambda<\infty\}}|M_{\tau_\lambda}|\bigr)
&\leq2\lim_{\lambda\uparrow\infty}\E\bigl(\indicator{\{\tau_\lambda<\infty\}}|M_\infty|\bigr)\\
&=2\E\Bigl(\lim_{\lambda\uparrow\infty}\indicator{\{\tau_\lambda<\infty\}}|M_\infty|\Bigr)
=0,
\end{split}
\end{equation*}
by virtue of the dominated convergence theorem, since \eqref{eqthmsec1:UIMart2} implies that $\lim_{\lambda\uparrow\infty}\indicator{\{\tau_\lambda<\infty\}}=0$, and $\E(|M_\infty|)<\infty$. That is to say, Condition~\eqref{eqsec1:CondB} holds.
\vspace{2mm}\newline\noindent
$(\Leftarrow)$~Suppose $M\in\Mloc$ satisfies Conditions~\eqref{eqsec1:CondA}--\eqref{eqsec1:CondC}. Then
\begin{equation*}
0\leq\E\biggl(\varliminf_{\lambda\uparrow\infty}\E(|M^{\tau_\lambda}_\infty-M_\infty|\,|\,\sigalg{F}_t)\biggr)
\leq\varliminf_{\lambda\uparrow\infty}\E(|M^{\tau_\lambda}_\infty-M_\infty|)=0,
\end{equation*}
for all $t\geq 0$, by virtue of Fatou's lemma and Lemma~\ref{eqsec1:Lemma3}, from which it follows that
\begin{equation*}
\varliminf_{\lambda\uparrow\infty}\E(|M^{\tau_\lambda}_\infty-M_\infty|\,|\,\sigalg{F}_t)=0.
\end{equation*}
Consequently, there exists a non-decreasing sequence $(\lambda_n)_{n\in\N}$ of non-negative $\sigalg{F}_t$-measurable random variables, such that $\lim_{n\uparrow\infty}\lambda_n=\infty$ and
\begin{equation*}
\lim_{n\uparrow\infty}\E(|M^{\tau_{\lambda_n}}_\infty-M_\infty|\,|\,\sigalg{F}_t)=0,
\end{equation*}
for all $t\geq 0$. Hence,
\begin{equation*}
\E(M_\infty\,|\,\sigalg{F}_t)
=\lim_{n\uparrow\infty}\E(M^{\tau_{\lambda_n}}_\infty\,|\,\sigalg{F}_t)
=\lim_{n\uparrow\infty}M^{\tau_{\lambda_n}}_t=M_t,
\end{equation*}
for all $t\geq 0$, since Lemma~\ref{lemsec2:Lemma1} ensures that $M^{\tau_{\lambda_n}}\in\M$, for each $n\in\N$, and $\lim_{n\uparrow\infty}\tau_{\lambda_n}=\infty$, by \eqref{eqthmsec1:UIMart2}, since $\{\lim_{n\uparrow\infty}\tau_{\lambda_n}=\infty\}\supseteq\bigl\{\lim_{n\uparrow\infty}
\indicator{\{\tau_{\lambda_n}=\infty\}}=1\bigr\}$. This establishes that $M\in\M$.
\end{proof}

\section{Some Examples}
\label{sec3}
We begin by constructing three examples of local martingales for which precisely one of Conditions~\eqref{eqsec1:CondA}--\eqref{eqsec1:CondC} fails (a different one in each case), while the other two hold. In each case, Theorem~\ref{thmsec1:UIMart} legislates that the process in question is not a uniformly integrable martingale. This establishes the minimality of Conditions~\eqref{eqsec1:CondA}--\eqref{eqsec1:CondC}.

A Brownian motion is an obvious example of a local martingale for which Condition~\eqref{eqsec1:CondA} fails, while Conditions~\eqref{eqsec1:CondB}--\eqref{eqsec1:CondC} hold. However, the class of non-negative time-homogeneous regular diffusions in natural scale provides a more interesting family of examples. All processes of this type are local martingales that satisfy Conditions~\eqref{eqsec1:CondB}--\eqref{eqsec1:CondC}. However, since the limit in Condition~\eqref{eqsec1:CondA} is non-zero for such processes, it follows that they cannot be uniformly integrable martingales.

\begin{example}[Condition~\eqref{eqsec1:CondA} fails]
\label{exsec3:CondAFails1}
Let $X=(X_t)_{t\geq 0}$ be a non-negative time-homogeneous regular scalar diffusion in natural scale, with state-space $[0,\infty)$ or $(0,\infty)$, depending on its behaviour at the origin. Since such a process is continuous, it trivially satisfies Condition~\eqref{eqsec1:CondB}. Being in natural scale means that the scale function for $X$ is determined by $\scale(x)\coloneqq x$, for all $x>0$. This ensures that $X$ is a non-negative $\P_x$--local martingale, for all $x>0$, and consequently also a non-negative $\P_x$--supermartingale. As a result, it satisfies Condition~\eqref{eqsec1:CondC}. The fact that $X$ is a non-negative supermartingale imposes constraints on its behaviour at the origin. In particular, the origin is either an absorbing boundary or a natural boundary. In the former case the state space of $X$ is $[0,\infty)$, while it is $(0,\infty)$ in the latter case. Either way, we observe that
\begin{equation*}
\P_x\biggl(\sup_{t\geq 0}X_t>\lambda\biggr)=\P_x(\tau_\lambda<\infty)
=\lim_{l\downarrow 0}\P_x(\tau_\lambda<\tau_l)
=\lim_{l\downarrow 0}\frac{\scale(x)-\scale(l)}{\scale(\lambda)-\scale(l)}
=\frac{x}{\lambda},
\end{equation*}
for all $x>0$ and all $\lambda\geq x$, where $\P_x$ is the probability measure under which $X_0=x$.\footnote{There is a slight abuse of notation here, in the sense that $\tau_\lambda$ should be interpreted as the first-exit time \eqref{eqsec1:FirstExitTime} with $M$ replaced by $X$, for any $\lambda\geq 0$.} Consequently, we obtain
\begin{equation*}
\lim_{\lambda\uparrow\infty}\lambda\P_x\biggl(\sup_{t\geq 0}X_t>\lambda\biggr)=x>0,
\end{equation*}
for all $x>0$. That is to say, $X$ is not a uniformly integrable martingale, due to the failure of Condition~\eqref{eqsec1:CondA}.
\qed
\end{example}

The next example constructs a non-negative pure-jump martingale that is not a uniformly integrable martingale, since it satisfies Conditions~\eqref{eqsec1:CondA} and \eqref{eqsec1:CondC}, but not Condition~\eqref{eqsec1:CondB}. Starting with an initial value of one, the process jumps only at integer-valued times, while remaining constant over the intervening intervals. Negative jumps take it to zero, where it is absorbed, while the sizes of successive positive jumps grow combinatorially. To ensure that the resulting process is a martingale, the probabilities of positive jumps decrease very quickly.  

\begin{example}[Condition~\eqref{eqsec1:CondB} fails]
\label{exsec3:CondBFails}
Suppose $(\Omega,\sigalg{F},\P)$ supports a sequence $(Y_n)_{n\in\Z_+}$ of positive random variables, with $Y_0=1$ and 
\begin{equation}
\label{eqexsec3:CondBFails1}
\P(Y_n\in\d y)\coloneqq\frac{(n+1)!}{n}\indicator{(n!,(n+1)!]}(y)\frac{1}{y^2}\,\d y,
\end{equation}
for all $y\in\R_+$ and each $n\in\N$, as well as a sequence $(\xi_n)_{n\in\Z_+}$ of Bernoulli random variables, with $\xi_0=1$ and
\begin{equation}
\label{eqexsec3:CondBFails2}
\P(\xi_n=1\,|\,\xi_0,\cdots,\xi_{n-1},Y_0,\cdots,Y_{n-1})\coloneqq\frac{Y_{n-1}}{\E(Y_n)} \prod_{i=0}^{n-1}\xi_i
\end{equation}
for each $n\in\N$. We assume that $Y_n$ is independent of $\xi_0,\cdots,\xi_n$ and $Y_0,\cdots,Y_{n-1}$, for each $n\in\N$. The filtration $\filt{F}=(\sigalg{F}_t)_{t\geq 0}$ is determined by $\sigalg{F}_t\coloneqq\sigma(\xi_n,Y_n\,|\,0\leq n\leq\floor{t})$, for all $t\geq 0$, while the process $M=(M_t)_{t\geq 0}$ is specified by
\begin{equation*}
M_t\coloneqq Y_{\floor{t}} \prod_{i=0}^{\floor{t}}\xi_i,	
\end{equation*}
for all $t\geq 0$. It follows that $M$ is adapted to $\filt{F}$, while the boundedness of $Y_n$, for each $n\in\Z_+$, ensures that $\E(|M_t|)<\infty$, for all $t\geq 0$. Also note that \eqref{eqexsec3:CondBFails2} implies that $\prod_{i=0}^n\xi_i=\xi_n$, for each $n\in\Z_+$, so that we may write $M_t=\xi_{\floor{t}}Y_{\floor{t}}$,	for all $t\geq 0$. This yields the useful identities
\begin{equation}
\label{eqexsec3:CondBFails3}
\indicator{\{M_n>0\}}=\indicator{\{\xi_n=1\}}=\xi_n,	
\end{equation}
for each $n\in\Z_+$. It also allows us to rewrite \eqref{eqexsec3:CondBFails2} as follows:
\begin{equation}
\label{eqexsec3:CondBFails4}
\P(\xi_n=1\,|\,\sigalg{F}_{n-1})=\frac{M_{n-1}}{\E(Y_n)},	
\end{equation}
for each $n\in\N$. It is now easy to see that $M$ is a martingale, since
\begin{align*}
\E(M_n\,|\,\sigalg{F}_{n-1})=\E(\xi_nY_n\,|\,\sigalg{F}_{n-1})
&=\E\big(\xi_n\E\bigl(Y_n\,\bigl|\,\sigma(\xi_n)\vee\sigalg{F}_{n-1}\bigr)\,\bigl|\,\sigalg{F}_{n-1}\bigr)\\
&=\E(\xi_n\,|\,\sigalg{F}_{n-1})\E(Y_n)
=\P(\xi_n=1\,|\,\sigalg{F}_{n-1})\E(Y_n)
=M_{n-1},
\end{align*}
for each $n\in\N$, by virtue of \eqref{eqexsec3:CondBFails3}, \eqref{eqexsec3:CondBFails4}, and the fact that $Y_n$ is independent of $\sigma(\xi_n)\vee\sigalg{F}_{n-1}$. Moreover, since $M$ is non-negative, Condition~\eqref{eqsec1:CondC} holds a fortiori. Next, we compute the probability that $M$ is strictly positive at any integer-valued time as follows:
\begin{equation*}
\P(M_n>0)=\P(\xi_n=1)=\E\bigl(\P(\xi_n=1\,|\,\sigalg{F}_{n-1})\bigr)
=\E\biggl(\frac{M_{n-1}}{\E(Y_n)}\biggr)=\frac{1}{\E(Y_n)},	
\end{equation*}
for each $n\in\N$, with the help of \eqref{eqexsec3:CondBFails3}, \eqref{eqexsec3:CondBFails4}, and the fact that $M$ is a martingale with $M_0=1$. Consequently, given $n\in\N$, we obtain 
\begin{equation*}
\P(M_n>\lambda)=\P(\xi_nY_n>\lambda)=\P(\xi_n=1,Y_n>\lambda)
=\P(\xi_n=1)\P(Y_n>\lambda)
=\frac{\P(Y_n>\lambda)}{\E(Y_n)},	
\end{equation*}
for all $\lambda\geq 0$, since $Y_n$ is independent of $\xi_n$. Now, given $\lambda>1$, let $n\in\N$ be the unique positive integer such that $n!<\lambda\leq(n+1)!$. In that case, the previous two identities, together with \eqref{eqexsec3:CondBFails1}, give
\begin{equation*}
\begin{split}
&\lambda\P\biggl(\sup_{t\geq 0}|M_t|>\lambda\biggr)
\leq\lambda\bigl(\P(M_n>\lambda)+\P(M_{n+1}>0)\bigr)
=\lambda\left(\frac{\P(Y_n>\lambda)}{\E(Y_n)}+\frac{1}{\E(Y_{n+1})}\right)\\
&\leq\frac{\lambda\P(Y_n>\lambda)}{\E(Y_n)}+\frac{(n+1)!}{\E(Y_{n+1})}\\
&=\lambda\biggl(\frac{(n+1)!}{n}\int_{\lambda}^{(n+1)!}\frac{1}{y^2}\,\d y\biggr)\biggl(\frac{(n+1)!}{n}\int_{n!}^{(n+1)!}\frac{1}{y}\,\d y\biggr)^{-1}+(n+1)!\biggl(\frac{(n+2)!}{n+1}\int_{(n+1)!}^{(n+2)!}\frac{1}{y}\,\d y\biggr)^{-1}\\
&\leq\lambda\biggl(\frac{(n+1)!}{n}\frac{1}{\lambda}\biggr)\biggl(\frac{(n+1)!}{n}\ln(n+1)\biggr)^{-1}+(n+1)!\biggl(\frac{(n+2)!}{n+1}\ln(n+2)\biggr)^{-1}\\
&=\frac{1}{\ln(n+1)}+\frac{n+1}{(n+2)\ln(n+2)}
<\frac{2}{\ln(n+1)},
\end{split}  
\end{equation*}
by virtue of the inclusion $\{\sup_{t\geq 0}M_t>\lambda\}\subseteq\{M_n>\lambda\}\cup\{M_{n+1}>0\}$. Consequently,
\begin{equation*}
\lim_{\lambda\uparrow\infty}\lambda\P\biggl(\sup_{t\geq 0}|M_t|>\lambda\biggr)
\leq\lim_{n\uparrow\infty}\frac{2}{\ln(n+1)}=0,
\end{equation*}
which establishes that $M$ satisfies Condition~\eqref{eqsec1:CondA}. Finally, given $n\in\N$, we use the identities $\xi_{n+1}^2=\xi_{n+1}$ and $\xi_{n+1}\xi_n=\xi_{n+1}\prod_{i=0}^n\xi_i=\prod_{i=0}^{n+1}\xi_i=\xi_{n+1}$ to get
\begin{equation*}
\begin{split}
\E\bigl(&\indicator{\{\tau_{n!}<\infty\}}|\Delta M_{\tau_{n!}}|\bigr)
=\E\bigl(\indicator{\{M_n>0\}}\Delta M_n\bigr)
=\E(\xi_n\Delta M_n)
=\E\bigl(\xi_n(\xi_nY_n-\xi_{n-1}Y_{n-1})\bigr)\\
&=\E\bigl(\xi_n(Y_n-Y_{n-1})\bigr)
=\E(M_n)-\E\bigl(\P(\xi_n=1\,|\,\sigalg{F}_{n-1})Y_{n-1}\bigr)
=1-\E\biggl(\frac{M_{n-1}}{\E(Y_n)}Y_{n-1}\biggr)\\
&\geq 1-\E\biggl(\frac{M_{n-1}}{\E(Y_n)}(n-1)!\biggr)
=1-\frac{(n-1)!}{\E(Y_n)}
=1-(n-1)!\biggl(\frac{(n+1)!}{n}\int_{n!}^{(n+1)!}\frac{1}{y}\,\d y\biggr)^{-1}\\
&=1-(n-1)!\biggl(\frac{(n+1)!}{n}\ln(n+1)\biggr)^{-1}
=1-\frac{1}{(n+1)\ln(n+1)},
\end{split}	
\end{equation*}
with the help of \eqref{eqexsec3:CondBFails1}, \eqref{eqexsec3:CondBFails3} and \eqref{eqexsec3:CondBFails4}, and the fact that $M$ is a martingale. Hence,
\begin{equation*}
\varlimsup_{\lambda\uparrow\infty}\E\bigl(\indicator{\{\tau_\lambda<\infty\}}|\Delta M_{\tau_\lambda}|\bigr)
\geq1-\lim_{n\uparrow\infty}\frac{1}{(n+1)\ln(n+1)}=1,
\end{equation*}
from which we deduce that $M$ does not satisfy Condition~\eqref{eqsec1:CondB}. So $M$ is a non-negative martingale that satisfies Conditions~\eqref{eqsec1:CondA} and \eqref{eqsec1:CondC}, but not Condition~\eqref{eqsec1:CondB}, and is thus not a uniformly integrable martingale.
\qed
\end{example}

The next example presents a continuous local martingale that satisfies Conditions~\eqref{eqsec1:CondA} and \eqref{eqsec1:CondB}, but not Condition~\eqref{eqsec1:CondC}. It elaborates on an example originally due to  \citet{AGY80}.

\begin{example}[Condition~\eqref{eqsec1:CondC} fails]
\label{exsec3:CondCFails}
Let $B$ be a one-dimensional Brownian motion defined on $(\Omega,\sigalg{F},\filt{F},\P)$, and suppose the sigma-algebra $\sigalg{F}_0$ accommodates a discrete random variable $Y$, whose distribution is determined by
\begin{equation*}
\P(Y=n)\coloneqq\frac{c}{n^2\ln(n+2)},	
\end{equation*}
for each $n\in\N$, where
\begin{equation*}
c\coloneqq\biggl(\sum_{i=1}^\infty\frac{1}{i^2\ln(i+2)}\biggr)^{-1}.	
\end{equation*}
Now let
\begin{equation*}
\rho\coloneqq\inf\{t\geq 0\,|\,|B_t|=Y\}	
\end{equation*}
denote the first hitting time of $Y$ by $|B|$, and note that $\rho<\infty$. The definition of $Y$ ensures that
\begin{equation*}
\begin{split}
n\P(Y\geq n)=n\sum_{j=n}^\infty\frac{c}{j^2\ln(j+2)}
\leq\frac{cn}{\ln(n+2)}\sum_{j=n}^\infty\frac{1}{j^2}
\leq\frac{cn}{\ln(n+2)}\int_{n-1}^\infty\frac{1}{x^2}\,\d x
&=\frac{cn}{(n-1)\ln(n+2)}\\
&\leq\frac{2c}{\ln(n+2)},
\end{split}
\end{equation*}
for each $n\in\N$. The martingale $M\coloneqq B^\rho$ then satisfies Condition~\eqref{eqsec1:CondA}, since
\begin{equation*}
\lim_{\lambda\uparrow\infty}\lambda\P\biggl(\sup_{t\geq 0}|M_t|>\lambda\biggr)
=\lim_{\lambda\uparrow\infty}\lambda\P\biggl(\sup_{t\geq 0}|B^\rho_t|>\lambda\biggr)
=\lim_{\lambda\uparrow\infty}\lambda\P(|B_\rho|>\lambda)
=\lim_{n\uparrow\infty}n\P(Y\geq n)=0.
\end{equation*}
Moreover, $M$ satisfies Condition~\eqref{eqsec1:CondB}, by virtue of its continuity. Based on these observations, Lemma~\ref{lemsec2:Lemma2} ensures that $M_\infty\coloneqq M_{\infty-}$ exists and satisfies $M_\infty=B_\rho=\pm Y$. However,
\begin{equation*}
\E(|M_\infty|)=\E(Y)
=\sum_{n=1}^\infty\frac{c}{n\ln(n+2)}=\infty
\end{equation*}
implies that $M$ does not satisfy Condition~\eqref{eqsec1:CondC}, which implies that it cannot be a uniformly integrable martingale.
\qed
\end{example}

Consider a non-negative time-homogeneous regular scalar diffusion $X$ in natural scale, as in Example~\ref{exsec3:CondAFails1}. Although we have established that $X$ is not a uniformly integrable martingale, the question of whether it is a martingale or a strict local martingale is often important in applications. First note that $X$ satisfies Conditions~\eqref{eqsec1:CondB'} and \eqref{eqsec1:CondC'}, for the same reasons that it satisfies Conditions~\eqref{eqsec1:CondB} and \eqref{eqsec1:CondC}. According to Corollary~\ref{corsec1:Martingales}, it is thus a $\P_x$-martingale, for all $x>0$, if and only if it satisfies Condition~\eqref{eqsec1:CondA'}.

Fortunately, it is straightforward to check whether $X$ satisfies Condition~\eqref{eqsec1:CondA'} in this setting. First, given $\alpha>0$, we recall the identity
\begin{equation*}
\E_x(\e^{-\alpha\tau_\lambda})=\frac{\psi_\alpha(x)}{\psi_\alpha(\lambda)},
\end{equation*}
for all $x>0$ and all $\lambda\geq x$, where $\psi_\alpha$ is the unique (up to multiplication by a positive scalar) monotonically increasing solution to the ordinary differential equation
\begin{equation}
\label{eqsec3:ODE}
\frac{\d}{\d\speed}\frac{\d}{\d x}\psi_\alpha(x)=\alpha\psi_\alpha(x),
\end{equation}
for all $x>0$ \citep[see e.g.][Section~II.1]{BS02}.\footnote{Note that $\tau_\lambda$ should once again be interpreted as the first-exit time \eqref{eqsec1:FirstExitTime} with $M$ replaced by $X$, for any $\lambda\geq 0$.} Next, we observe that
\begin{equation*}
0\leq\lambda\P_x(\tau_{\lambda}<t)\leq\lambda\P_x(\tau_{\lambda}<\infty)	
=\lambda\P_x\biggl(\sup_{t\geq 0}X_t>\lambda\biggr)\leq x,
\end{equation*}
for all $x>0$ and all $\lambda\geq 0$, by virtue of the maximal inequality for non-negative supermartingales \citep[see e.g.][Exercise~II.1.15]{RY99}. Moreover, the Laplace transform of the upper bound above is finite:
\begin{equation*}
\Laplace{\alpha}\{x\}=\int_0^\infty\e^{-\alpha t}x\,\d t=\frac{x}{\alpha}<\infty,
\end{equation*}
for all $x>0$. We may thus use the dominated convergence theorem to get
\begin{equation*}
\begin{split}
\Laplace{\alpha}\biggl\{&\lim_{\lambda\uparrow\infty}\lambda\P_x\biggl(\sup_{s\in[0,t]}X_s>\lambda\biggr)\biggr\}
=\Laplace{\alpha}\Bigl\{\lim_{\lambda\uparrow\infty}\lambda\P_x(\tau_\lambda<t)\Bigr\}
=\lim_{\lambda\uparrow\infty}\lambda\Laplace{\alpha}\{\P_x(\tau_\lambda<t)\}	\\
&=\lim_{\lambda\uparrow\infty}\lambda\int_0^\infty\e^{-\alpha t}\P_x(\tau_\lambda<t)\,\d t
=\lim_{\lambda\uparrow\infty}\frac{\lambda}{\alpha}\int_0^\infty\e^{-\alpha t}\P_x(\tau_\lambda\in\d t)
=\lim_{\lambda\uparrow\infty}\frac{\lambda}{\alpha}\E_x(\e^{-\alpha\tau_\lambda})\\
&=\lim_{\lambda\uparrow\infty}\frac{\lambda}{\alpha}\frac{\psi_\alpha(x)}{\psi_\alpha(\lambda)}
=\frac{1}{\alpha}\frac{\psi_\alpha(x)}{\psi_\alpha'(\infty-)},
\end{split}
\end{equation*}
for all $x>0$, where the fourth equality follows from the integration by parts formula, and the final equality is an application of L'H\^opital's rule. Finally, the uniqueness of Laplace transforms ensures that Condition~\eqref{eqsec1:CondA'} holds if and only if $\psi_\alpha'(\infty-)=\infty$.

\citet{DS02a} obtained a popular characterisation of martingales within the class of driftless non-negative time-homogeneous It\^o diffusions. \citet{Kot06} subsequently extended their result to cover the class of all non-negative time-homogeneous regular scalar diffusions $X$ in natural scale, by showing that $X$ is a $\P_x$ martingale, for all $x>0$, if and only if
\begin{equation*}
\int_1^\infty x\,\speed(\d x)=\infty.	
\end{equation*}
It follows that this condition is equivalent to $\psi_\alpha'(\infty-)=\infty$ \citep[see][for a formal demonstration of the equivalence]{HP11}. We employ these two criteria below to investigate the martingale properties of two specific non-negative time-homogeneous regular scalar diffusions in natural scale. First we demonstrate that a driftless geometric Brownian motion is a martingale.

\begin{example}[Condition~\eqref{eqsec1:CondA'} holds]
\label{exsec3:CondsA'Holds}
Let $X$ be a driftless geometric Brownian motion. That is to say, the state-space of $X$ is $(0,\infty)$, and its scale function and speed measure are given by
\begin{equation*}
\scale(x)\coloneqq x\qquad\text{and}\qquad\speed(\d x)\coloneqq\frac{2}{x^2}\,\d x,
\end{equation*}
for all $x>0$. Furthermore, given any $\alpha>0$, the monotonically increasing solution to \eqref{eqsec3:ODE} is
\begin{equation*}
\psi_\alpha(x)\coloneqq x^{\frac{1}{2}(\sqrt{8\alpha+1}+1)},
\end{equation*}
for all $x>0$. Then
\begin{equation*}
\int_1^\infty x\,\speed(\d x)=2\ln(\infty-)=\infty
\end{equation*}
implies that $X$ is a $\P_x$-martingale, for all $x>0$. We arrive at the same conclusion by observing that
\begin{equation*}
\psi_\alpha'(\infty-)=\lim_{x\uparrow\infty}\frac{1}{2}\bigl(\sqrt{8\alpha+1}+1\bigr)x^{\frac{1}{2}(\sqrt{8\alpha+1}-1)}=\infty,
\end{equation*}
or by noting that $\infty$ is a natural boundary for $X$.
\qed
\end{example}

Our second example is the inverse of a Bessel process of dimension three. We demonstrate formally that such a process is a strict local martingale:

\begin{example}[Condition~\eqref{eqsec1:CondA'} fails]
\label{exsec3:CondsA'Fails}
Let $X$ be the inverse of a Bessel process of dimension three. Its state-space is $(0,\infty)$, while its scale function and speed measure are given by
\begin{equation*}
\scale(x)\coloneqq x\qquad\text{and}\qquad\speed(\d x)\coloneqq\frac{2}{x^4}\,\d x,
\end{equation*}
for all $x>0$. In that case, the monotonically increasing solution to \eqref{eqsec3:ODE} is given by
\begin{equation*}
\psi_\alpha(x)\coloneqq x^{-\frac{\sqrt{2\alpha}}{x}},
\end{equation*}
for all $x>0$, where $\alpha>0$ is given. Since 
\begin{equation*}
\int_1^\infty x\,\speed(\d x)=1-\lim_{x\uparrow\infty}\frac{1}{x^2}=1<\infty,
\end{equation*}
it follows that $X$ is a strict $\P_x$-local martingale, for all $x>0$. This is confirmed by observing that 
\begin{equation*}
\psi_\alpha'(\infty-)=\lim_{x\uparrow\infty}\biggl(1+\frac{\sqrt{2\alpha}}{x}\biggr)x^{-\frac{\sqrt{2\alpha}}{x}}=1<\infty,
\end{equation*}
or by noting that $\infty$ is an entrance boundary for $X$.
\qed
\end{example}

\section{A Remark on $\Hp{1}$ Martingales}
\label{sec4}
Following \citet[][Section~VII.3]{DM82}, let $\Hp{1}$ denote the family of local martingales $M\in\Mloc$, for which $\E\bigl(\sup_{t\geq 0}|M_t|\bigr)<\infty$.\footnote{\citet[][Section~VII.3]{DM82} actually define $\Hp{1}$ to be the family of martingales satisfying $\E\bigl(\sup_{t\geq 0}|M_t|\bigr)<\infty$. However, it is easy to see that every local martingale satisfying that condition is in fact a (uniformly integrable) martingale \citep[see e.g.][Theorem~I.51]{Pro05}.} Since
\begin{equation*}
\E\biggl(\sup_{t\geq 0}|M_t|\biggr)=\int_0^\infty\P\biggl(\sup_{t\geq 0}|M_t|>t\biggr)\,\d t
\end{equation*}
and
\begin{equation*}
\sum_{n=1}^\infty\P\biggl(\sup_{t\geq 0}|M_t|>n\biggr)
\leq\int_0^\infty\P\biggl(\sup_{t\geq 0}|M_t|>t\biggr)\,\d t
\leq 1+\sum_{n=1}^\infty\P\biggl(\sup_{t\geq 0}|M_t|>n\biggr),
\end{equation*}
it follows that a local martingale $M\in\Mloc$ belongs to $\Hp{1}$ if and only if it satisfies the condition
\begin{equation}
\label{eqsec3:CondD}
\sum_{n=1}^\infty\P\biggl(\sup_{t\geq 0}|M_t|>n\biggr)<\infty.\tag{D}\end{equation}
While it is clear that $\Hp{1}\subseteq\M$ \citep[see e.g][Theorem~I.51]{Pro05}, the reverse inclusion does not hold. Interpreted in the light of Theorem~\ref{thmsec1:UIMart}, this implies that Conditions~\eqref{eqsec1:CondA}--\eqref{eqsec1:CondC} must hold for every local martingale that satisfies Condition~\eqref{eqsec3:CondD}, but not conversely. The following example exploits this observation to provide a recipe for constructing non-negative uniformly integrable martingales in $\M\setminus\mathscr{H}^1$.

\begin{example}[Condition~\eqref{eqsec3:CondD} fails]
\label{exsec4:CondDFails}
Fix a non-negative local martingale $M\in\Mloc\setminus\M$ that is not a uniformly integrable martingale, and define the non-decreasing sequence $(c_n)_{n\in\N}\subset(1,\infty)$ by setting
\begin{equation}
\label{eqexsec4:CondDFails1}
c_n\coloneqq \ln\biggl(\e+\sum_{k=1}^n\P\biggl(\sup_{t\geq 0}|M_t|>k\biggr)\biggr),	
\end{equation}
for each $n\in\N$. Since $M\notin\Hp{1}$, it follows that $\lim_{n\uparrow\infty}c_n=\infty$. Next, suppose that $(\Omega,\sigalg{F},\P)$ accommodates a discrete $\sigalg{F}_0$-measurable random variable $Y\in\N$ that is independent of $M$, and whose distribution satisfies $\P(Y>n)=\sfrac{1}{c_n}$, for each $n\in\N$, and let
\begin{equation}
\label{eqexsec4:CondDFails2}
\sigma\coloneqq\inf\{t\geq 0\,|\,|M_t|>Y\}	
\end{equation}
denote the first time $M$ exceeds $Y$. It follows that 
\begin{equation*}
\P\biggl(\sup_{t\geq 0}|M^\sigma_t|>n\biggr)
\geq\P\biggl(\sup_{t\geq 0}|M_t|>n\biggr)\P(Y>n)
=\frac{1}{c_n}\P\biggl(\sup_{t\geq 0}|M_t|>n\biggr),
\end{equation*}
for each $n\in\N$. Consequently,
\begin{equation*}
\sum_{n=1}^\infty\P\biggl(\sup_{t\geq 0}|M^\sigma_t|>n\biggr)
\geq\lim_{m\uparrow\infty}\frac{1}{c_m}\sum_{n=1}^m\P\biggl(\sup_{t\geq 0}|M_t|>n\biggr)
=\lim_{m\uparrow\infty}\frac{\e^{c_m}-\e}{c_m}=\infty,
\end{equation*}
since $(c_n)_{n\in\N}$ is non-decreasing and $\lim_{n\uparrow\infty}c_n=\infty$. This implies that $M^\sigma\notin\Hp{1}$. On the other hand, the almost sure limit $M^\sigma_\infty\coloneqq M^\sigma_{\infty-}\in\R$ exists, since $M^\sigma$ is a non-negative local martingale, and hence also a non-negative supermartingale. Moreover,
\begin{equation*}
\begin{split}
\E(M^\sigma_\infty)&=\sum_{n=1}^\infty\E(M^\sigma_\infty\,|\,Y=n)\P(Y=n)
=\sum_{n=1}^\infty\E(M^{\tau_n}_\infty\,|\,Y=n)\P(Y=n)\\
&=\sum_{n=1}^\infty\E(M^{\tau_n}_\infty)\P(Y=n)
=\sum_{n=1}^\infty\E(M_0)\P(Y=n)
=\E(M^\sigma_0),
\end{split}
\end{equation*}
where the penultimate equality follows from the fact that $M^{\tau_n}\in\M$, for each $n\in\N$, by virtue of the discussion following Lemma~\ref{lemsec2:Lemma1}. This establishes that $M^\sigma\in\M$.
\qed
\end{example}

Consider a local martingale $M\in\Mloc$ that belongs to Class~($\mathcal{C}_0$), according to the terminology of \citet{NY06}, and suppose that $M_0=1$. In that case, $M$ is a strictly positive local martingale without any positive jumps, for which $M_\infty\coloneqq M_{\infty-}=0$. The construction in Example~\ref{exsec4:CondDFails} is then applicable, since $\E(M_\infty)=0<1=\E(M_0)$ implies that $M\in\Mloc\setminus\M$. Moreover, an application of Doob's maximal identity \citep[see][Lemma~2.1]{NY06} provides the following concrete representation for the non-decreasing sequence $(c_n)_{n\in\N}$, defined by \eqref{eqexsec4:CondDFails1}:
\begin{equation*}
c_n=\ln\biggl(\e+\sum_{k=1}^n\frac{1}{k}\biggr),	
\end{equation*}
for each $n\in\N$. It is then straightforward to see that $\lim_{n\uparrow\infty}c_n=\infty$, which is the crucial ingredient for showing that $M^\sigma\notin\Hp{1}$, where the stopping time $\sigma$ is given by \eqref{eqexsec4:CondDFails2}.

\bibliography{ProbFinBiblio}

\begin{thebibliography}{}

\bibitem[\protect\citeauthoryear{Azema, Gundy, and Yor}{Azema
  et~al.}{1980}]{AGY80}
Azema, J., R.~F. Gundy, and M.~Yor (1980).
\newblock Sur l'integrabilite uniforme des martingales continues.
\newblock In {\em S\'{e}minaire de Probabilit\'{e}s XIV}, Volume 784 of {\em
  Lecture Notes in Mathematics}, pp.\  53--61. Berlin: Springer.

\bibitem[\protect\citeauthoryear{Borodin and Salminen}{Borodin and
  Salminen}{2002}]{BS02}
Borodin, A.~N. and P.~Salminen (2002).
\newblock {\em Handbook of Brownian Motion\/} (Second ed.).
\newblock Basel: Birkh{\"a}user.

\bibitem[\protect\citeauthoryear{Delbaen and Shirakawa}{Delbaen and
  Shirakawa}{2002}]{DS02a}
Delbaen, F. and H.~Shirakawa (2002).
\newblock No arbitrage condition for positive diffusion price processes.
\newblock {\em Asia-Pacific Finan. Markets\/}~{\em 9\/}(3--4), 159--168.

\bibitem[\protect\citeauthoryear{Dellacherie and Meyer}{Dellacherie and
  Meyer}{1982}]{DM82}
Dellacherie, C. and P.-A. Meyer (1982).
\newblock {\em Probabilities and Potential B: {T}heory of Martingales}.
\newblock Amsterdam: North-Holland.

\bibitem[\protect\citeauthoryear{Elworthy, Li, and Yor}{Elworthy
  et~al.}{1997}]{ELY97}
Elworthy, K.~D., X.~M. Li, and M.~Yor (1997).
\newblock On the tails of the supremum and the quadratic variation of strictly
  local martingales.
\newblock In {\em S\'{e}minaire de Probabilit\'{e}s XXXI}, Volume 1655 of {\em
  Lecture Notes in Mathematics}, pp.\  113--125. Berlin: Springer.

\bibitem[\protect\citeauthoryear{Elworthy, Li, and Yor}{Elworthy
  et~al.}{1999}]{ELY99}
Elworthy, K.~D., X.-M. Li, and M.~Yor (1999).
\newblock The importance of strictly local martingales; applications to radial
  {O}rnstein-{U}hlenbeck processes.
\newblock {\em Probab. Theory Related Fields\/}~{\em 115\/}(3), 325--355.

\bibitem[\protect\citeauthoryear{Hulley and Platen}{Hulley and
  Platen}{2011}]{HP11}
Hulley, H. and E.~Platen (2011).
\newblock A visual criterion for identifying {I}t\^o diffusions as martingales
  or strict local martingales.
\newblock In R.~C. Dalang, M.~Dozzi, and F.~Russo (Eds.), {\em Seminar on
  Stochastic Analysis, Random Fields and Applications VI}, Volume~63 of {\em
  Progress in Probability}, pp.\  147--157. Basel: Springer.

\bibitem[\protect\citeauthoryear{Jacod and Shiryaev}{Jacod and
  Shiryaev}{2003}]{JS03}
Jacod, J. and A.~N. Shiryaev (2003).
\newblock {\em Limit Theorems for Stochastic Processes\/} (Second ed.).
\newblock Berlin: Springer.

\bibitem[\protect\citeauthoryear{Kaji}{Kaji}{2007}]{Kaj07}
Kaji, S. (2007).
\newblock The tail estimation of the quadratic variation of a quasi left
  continuous local martingale.
\newblock {\em Osaka J. Math.\/}~{\em 44\/}(4), 893--907.

\bibitem[\protect\citeauthoryear{Kaji}{Kaji}{2008}]{Kaj08}
Kaji, S. (2008).
\newblock On the tail distributions of the supremum and the quadratic variation
  of a c\`adl\`ag local martingale.
\newblock In C.~Donati-Martin, M.~\'Emery, A.~Rouault, and C.~Stricker (Eds.),
  {\em S\'eminaire de Probabilit\'es XLI}, Volume 1934 of {\em Lecture Notes in
  Mathematics}, pp.\  401--420. Berlin: Springer.

\bibitem[\protect\citeauthoryear{Kaji}{Kaji}{2009}]{Kaj09}
Kaji, S. (2009).
\newblock The quadratic variations of local martingales and the first-passage
  times of stochastic integrals.
\newblock {\em J. Math. Kyoto Univ.\/}~{\em 49\/}(3), 491--502.

\bibitem[\protect\citeauthoryear{Kotani}{Kotani}{2006}]{Kot06}
Kotani, S. (2006).
\newblock On a condition that one-dimensional diffusion processes are
  martingales.
\newblock In {\em S\'{e}minaire de Probabilit\'{e}s XXXIX}, Volume 1874 of {\em
  Lecture Notes in Mathematics}, pp.\  149--156. Berlin: Springer.

\bibitem[\protect\citeauthoryear{Liptser and Novikov}{Liptser and
  Novikov}{2006}]{LN06}
Liptser, R. and A.~Novikov (2006).
\newblock Tail distributions of supremum and quadratic variation of local
  martingales.
\newblock In Y.~Kabanov, R.~Liptser, and J.~Stoyanov (Eds.), {\em From
  Stochastic Calculus to Mathematical Finance: The Shiryaev Festschrift}, pp.\
  421--432. Berlin: Springer.

\bibitem[\protect\citeauthoryear{Nikeghbali and Yor}{Nikeghbali and
  Yor}{2006}]{NY06}
Nikeghbali, A. and M.~Yor (2006).
\newblock {D}oobs maximal identity, multiplicative decompositions and
  enlargements of filtrations.
\newblock {\em Illinois J. Math.\/}~{\em 50\/}(4), 791--814.

\bibitem[\protect\citeauthoryear{Novikov}{Novikov}{1981}]{Nov81}
Novikov, A.~A. (1981).
\newblock A martingale approach to first passage problems and a new condition
  for {W}ald's identity.
\newblock In M.~Arat\'o, D.~Vermes, and A.~V. Balakrishnan (Eds.), {\em
  Stochastic Differential Systems}, Volume~36 of {\em Lecture Notes in Control
  and Information Sciences}, pp.\  146--156. Berlin: Springer.

\bibitem[\protect\citeauthoryear{Novikov}{Novikov}{1997}]{Nov97}
Novikov, A.~A. (1997).
\newblock Martingales, {T}auberian theorem, and strategies of gambling.
\newblock {\em Theory Probab. Appl.\/}~{\em 41\/}(4), 716--729.

\bibitem[\protect\citeauthoryear{Protter}{Protter}{2005}]{Pro05}
Protter, P.~E. (2005).
\newblock {\em Stochastic Integration and Differential Equations\/} (Second
  ed.).
\newblock Berlin: Springer.

\bibitem[\protect\citeauthoryear{Rao}{Rao}{1969}]{Rao69}
Rao, K.~M. (1969).
\newblock Quasi-martingales.
\newblock {\em Math. Scand.\/}~{\em 24}, 79--92.

\bibitem[\protect\citeauthoryear{Revuz and Yor}{Revuz and Yor}{1999}]{RY99}
Revuz, D. and M.~Yor (1999).
\newblock {\em Continuous Martingales and Brownian Motion\/} (Third ed.).
\newblock Berlin: Springer.

\bibitem[\protect\citeauthoryear{Ruf}{Ruf}{2015}]{ruf15}
Ruf, J. (2015).
\newblock The uniform integrability of martingales. {O}n a question by
  {A}lexander {C}herny.
\newblock {\em Stochastic Process. Appl.\/}.
\newblock To appear.

\bibitem[\protect\citeauthoryear{Takaoka}{Takaoka}{1999}]{Tak99}
Takaoka, K. (1999).
\newblock Some remarks on the uniform integrability of continuous martingales.
\newblock In J.~Az\'ema, M.~\'Emery, M.~Ledoux, and M.~Yor (Eds.), {\em
  S\'eminaire de Probabilit\'es XXXIII}, Volume 1709 of {\em Lecture Notes in
  Mathematics}, pp.\  327--333. Berlin: Springer.

\end{thebibliography}
\bibliographystyle{chicago}
\end{document}